\newtheorem{theorem}{Theorem}[section]
\newtheorem{lemma}[theorem]{Lemma}
\theoremstyle{definition}
\newtheorem{remark}[theorem]{Remark}
\numberwithin{equation}{section}
\newcommand\eg{{\it e.g.}}
\DeclareMathOperator{\area}{{\rm area}} 
\DeclareMathOperator{\length}{{\rm length}} 
\DeclareMathOperator{\sys}{{\rm sys}}
\DeclareMathOperator{\arcsinh}{arcsinh}
\long\def\forget#1\forgotten{} %
\numberwithin{equation}{section}
\title[Logarithmic systolic growth in every genus]{Logarithmic systolic growth for hyperbolic surfaces in every genus}
\author[M. Katz]{Mikhail G. Katz} \address{Department of
  Mathematics, Bar Ilan University, Ramat Gan 5290002 Israel}
\email{katzmik@math.biu.ac.il}
\author[S. Sabourau]{St\'ephane Sabourau}
\address{\parbox{\linewidth}{Univ Paris Est Creteil, CNRS, LAMA, F-94010 Creteil, France \\
Univ Gustave Eiffel, LAMA, F-77447 Marne-la-Vall\'ee, France}}
\email{stephane.sabourau@u-pec.fr}
\subjclass[2020]
{Primary 53C45; Secondary }
\begin{document}

\begin{abstract}
More than thirty years ago, Brooks and Buser--Sarnak constructed sequences of closed hyperbolic surfaces with logarithmic systolic growth in the genus.
Recently, Liu and Petri showed that such logarithmic systolic lower bound holds for every genus (not merely for genera in some infinite sequence) using random surfaces.
In this article, we show a similar result through a more direct approach relying on the original Brooks/Buser--Sarnak surfaces.
\end{abstract}


\thispagestyle{empty}


\keywords{systole, hyperbolic surface, logarithmic systolic growth}

\maketitle


\section{Introduction}


The systole of a closed hyperbolic surface~$S$, denoted by~$\sys(S)$,
is the length of a shortest (noncontractible) closed geodesic of~$S$.
By Mumford's compactness theorem, the systole function achieves a
maximum among all closed hyperbolic surfaces of a given genus.
By considering the area of a disk of radius~$\frac{1}{2} \sys(S)$, one
easily sees (see \cite[Lemma~5.2.1]{buser}) that the systole of every
closed hyperbolic surface~$S$ of genus~$g$ satisfies the upper bound
\begin{equation} \label{eq:2}
\sys(S) \leq 2 \log(4g-2).
\end{equation}

As far as lower bounds are concerned, Brooks~\cite{brooks88} and Buser--Sarnak~\cite{BS} constructed a sequence of closed hyperbolic surfaces~$S_{g_p}$ of genus~$g_p$ with
\begin{equation} \label{eq:gp}
g_p= (p^3-p) \nu +1
\end{equation}
obtained as congruence coverings 
of a fixed arithmetic surface~$S$ defined from a division quaternion algebra~$A$ 
such that
\begin{equation} \label{eq:log4}
\sys(S_{g_p}) \geq 4 \log(p)
\end{equation}
where $p$ is any odd prime and $\nu$ is  fixed and depends only on~$A$; see~\cite[Lemma~3.1]{brooks88} and~\cite[Eq.~(4.3)]{BS}.
This immediately leads to the following logarithmic systolic growth 
\begin{equation} \label{eq:4/3}
\sys(S_{g_p}) \geq \frac{4}{3} \log(g_p) - C
\end{equation}
where $C$ is a positive constant depending only on~$A$, but not on~$p$.

Combining the upper and lower bounds~\eqref{eq:2} and~\eqref{eq:4/3}, we obtain
\begin{equation} \label{eq:limsup}
\frac{4}{3} \leq \limsup_{g \to \infty} \max_{S \in \mathcal{M}_g} \frac{\sys(S)}{\log g} \leq 2
\end{equation}
where $\mathcal{M}_g$ is the moduli space of closed hyperbolic
surfaces of genus~$g$.  
See also Brooks~\cite{brooks}, Gromov~\cite{Gr96} and Schmutz Schaller~\cite{Sc93}.

The Brooks/Buser--Sarnak construction was generalized in~\cite{KSV} to
principal congruence covers of any closed arithmetic surface.
Makisumi \cite{Mak} proved that the multiplicative
constant~$\frac{4}{3}$ in~\eqref{eq:4/3} is optimal for these
congruence covers.  Other sequences of closed hyperbolic surfaces with
arbitrarily large genus satisfying a logarithmic growth have been
obtained in~\cite{Pet} and~\cite{PW} from graphs of large
(logarithmic) girth/systole. 
Still, these constructions provide families of surfaces of fairly sparse genera. 

Recently, Liu and Petri~\cite{LP} showed using a probabilistic method that for every~$c<\frac{2}{9}$, there exists a constant~$C>0$ such that for \emph{every} genus~$g \geq 2$ (not merely for genera in some infinite sequence), there exists a closed hyperbolic surface~$S_g$ of genus~$g$ with
\[
\sys(S_g) \geq c \log(g) - C.
\]
It follows that
\[
\liminf_{g \to \infty} \max_{S \in \mathcal{M}_g} \frac{\sys(S)}{\log g} \geq \frac{2}{9}.
\]
Note that their argument relies on an appropriate model of random surfaces and does not provide an explicit construction.

In the present article, we show a similar result, see
Theorem~\ref{theo:main}, without using probabilistic arguments (albeit
with a worse constant).  Instead, we use a more direct approach
relying on the original surfaces of Brooks/Buser--Sarnak.

\begin{theorem} \label{theo:intro}
For every $c<\frac{19}{120}$, there exists a constant~$C>0$ such that for every genus~$g \geq 2$, there exists a closed hyperbolic surface~$S_g$ of genus~$g$ with
\[
\sys(S_g) \geq c \log(g) - C.
\]
\end{theorem}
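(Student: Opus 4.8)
My plan is to realize every genus~$g$ by a single geometric surgery on the Brooks/Buser--Sarnak surface of the largest Brooks genus $g_p\le g$, keeping control of the systole throughout. The finitely many genera $g<g_3=24\nu+1$ are dealt with separately, by choosing for each such~$g$ any closed hyperbolic surface of genus~$g$ with positive systole and enlarging~$C$. So assume $g\ge g_3$ and let $p$ be the largest odd prime with $g_p\le g$. By Bertrand's postulate the next prime $p'$ satisfies $p'<2p$, so $g<g_{p'}<(p')^{3}\nu<8p^{3}\nu$, and hence $\sigma:=\sys(S_{g_p})\ge 4\log p>\tfrac43\log g-C_1$ with $C_1:=\tfrac43\log(8\nu)$. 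Put $X:=S_{g_p}$. If $g=g_p$, take $S_g:=X$. Otherwise set $k:=g-g_p\ge1$, let $\gamma\subset X$ be a shortest closed geodesic (necessarily simple, of length~$\sigma$), and cut $X$ along~$\gamma$; this produces a (possibly disconnected) surface~$\hat X$ with geodesic boundary consisting of one or two circles of length~$\sigma$ and with $\chi(\hat X)=2-2g_p$.

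I now insert a filler of genus exactly~$k$. Let $B$ be a compact hyperbolic surface of genus one with two geodesic boundary circles, both of length~$\sigma$, chosen so that every closed geodesic lying in~$B$, and every essential geodesic arc joining the two boundary circles of~$B$ or one of them to itself, has length at least $c_0\sigma-C_2$ for a suitable explicit $c_0>0$. Such~$B$ exist: decompose $B$ along one separating geodesic into a pair of pants with cuff lengths $(\sigma,\sigma,L)$ and a one-holed torus with geodesic boundary of length~$L$, where $L$ is chosen large compared with~$\sigma$ and the one-holed torus is taken symmetric. Making~$L$ large forces, by hyperbolic trigonometry and the collar lemma, all the orthogeodesic arcs of the pair of pants — in particular the one between its two length-$\sigma$ cuffs — to be long, while the symmetric one-holed torus with boundary~$L$ has interior systole asymptotic to~$L/3$ (a standard trace identity, the three simplest slopes being the systolic ones) and a pair of pants carries no closed geodesic at all. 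Gluing $k$ copies of~$B$ into a chain along geodesics of length~$\sigma$ yields a surface~$F$ of genus~$k$ — this works for every $k\ge1$, so the genus is matched with no divisibility obstruction — with two geodesic boundary circles of length~$\sigma$. Gluing~$F$ into~$\hat X$ by identifying its two boundary circles with the two copies of~$\gamma$ gives a closed hyperbolic surface~$S_g$ with $\chi(S_g)=(2-2g_p)+(-2k)=2-2g$, i.e.\ of genus exactly~$g$.

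It remains to bound $\sys(S_g)$ from below. The gluing geodesics cut~$S_g$ into~$\hat X$ and the $k$ blocks, each with geodesic boundary. A shortest closed geodesic~$c$ of~$S_g$ is simple, so either~$c$ lies in a single one of these pieces — then it is a closed geodesic of~$X$, of length $\ge\sigma$, or of a block, of length $\ge c_0\sigma-C_2$ — or~$c$ crosses some gluing geodesics, and, being a geodesic, minimally and essentially. In that case the intersection of~$c$ with~$\hat X$, if nonempty, is an essential geodesic arc~$\alpha$ whose endpoints lie on the length-$\sigma$ geodesic~$\gamma$ of~$X$; completing~$\alpha$ with the shorter of the two subarcs of~$\gamma$ between its endpoints (of length $\le\sigma/2$) gives a loop which is essential in~$X$ and not homotopic into~$\gamma$, hence of length $\ge\sigma$, so that $\length(\alpha)\ge\sigma/2$; likewise, the intersection of~$c$ with any block is an essential boundary-to-boundary arc, of length $\ge c_0\sigma-C_2$ by the choice of the blocks. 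Since $\length(c)$ is at least the length of any one of these arcs, $\sys(S_g)\ge\min\bigl(\sigma/2,\,c_0\sigma-C_2\bigr)\ge c_1\sigma-C_3$ for an explicit $c_1>0$, and combining with $\sigma>\tfrac43\log g-C_1$ gives $\sys(S_g)\ge\tfrac{4c_1}{3}\log g-C$, hence the theorem for every $c<\tfrac{4c_1}{3}$. The main obstacle, and the point on which the admissible value $c=\tfrac{19}{120}$ hinges, is precisely this quantitative last step: producing elementary blocks with two prescribed long boundary circles all of whose interior geodesics and boundary-to-boundary arcs are long, with uniform and not-too-small constants, and then tracking every cross-term in the systole count.
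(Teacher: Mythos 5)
Your strategy differs fundamentally from the paper's (which removes small disks from $S_{g_p}$ and pairwise glues their boundaries to produce a $\mathrm{CAT}(-1)$ surface, then invokes Jabbour--Sabourau to pass to a hyperbolic metric, with the exponent $\theta=\tfrac{21}{40}$ from the Baker--Harman--Pintz prime gap theorem forcing the constant $\tfrac{1-\theta}{3}=\tfrac{19}{120}$), and unfortunately your central technical claim is not just unproven but false.

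The claim that there exists a genus-one hyperbolic block~$B$ with two geodesic boundary circles of length~$\sigma$ in which every interior closed geodesic and every essential boundary-to-boundary arc has length at least $c_0\sigma - C_2$, for a uniform $c_0>0$, cannot hold. The block~$B$ has Euler characteristic~$-2$, hence fixed area~$4\pi$, independent of~$\sigma$. Double~$B$ along $\partial B = \sigma_1\cup\sigma_2$: one obtains a closed hyperbolic surface~$DB$ of genus~$3$, whose systole therefore satisfies $\sys(DB)\le 2\log(4\cdot 3-2)=2\log 10\approx 4.6$ by the standard disk-area bound. The systole of~$DB$ is realized either by one of the two curves $\sigma_i$ (length $\sigma$), or by an interior closed geodesic of one copy of~$B$, or by the double of an essential orthogeodesic arc of~$B$ from $\partial B$ to $\partial B$. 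If $\sigma$ is large, this forces $\min\bigl(\text{interior systole of }B,\ 2\cdot(\text{shortest essential }\partial\text{-to-}\partial\text{ arc})\bigr)\le 2\log 10$, so one of the two quantities you need to be of order~$\sigma$ is in fact bounded by an absolute constant. Making the separating curve~$L$ long does push the one-holed-torus systole to roughly~$L/3$ and the $\sigma_1$--$\sigma_2$ seam to roughly~$L/2$, but it simultaneously collapses the seam from~$\sigma_i$ to the $L$-cuff, and the global area constraint guarantees that some arc from $\partial B$ to $\partial B$ remains short regardless of how $L$ is chosen. Consequently, a shortest closed geodesic of~$S_g$ can cross a gluing circle between two consecutive blocks via a short arc on each side, and your bound $\sys(S_g)\ge\min(\sigma/2, c_0\sigma - C_2)$ does not follow.

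This is precisely the obstruction the paper circumvents by not inserting fixed-area hyperbolic blocks with long boundaries: there the surgery removes small disks of radius~$r\simeq\log\log p$ (so the new boundary circles are short, of length $2\pi\sinh r$, not of length~$\sigma$), glues them pairwise, and accepts the resulting $\mathrm{CAT}(-1)$ singularities; the key length estimate in Lemma~\ref{lem:sysmin} then comes from the annular buffer of width~$d$ around each removed disk, and the upper bound on~$r+d$ imposed by the covering count is what brings in the prime gap exponent. In your scheme the prime gap plays no role in the final constant (Bertrand's postulate would suffice), which is already a signal that the bookkeeping leading to $\tfrac{19}{120}$ is not being reproduced. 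To repair the approach along your lines you would have to let the blocks grow in area (and hence genus) with~$\sigma$, reintroducing a divisibility problem, or change the surgery so that the gluing circles are short rather than of length~$\sigma$ — which is essentially what the paper does.
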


\forget
As mentioned above, the lower bounds developed through arithmetic
means result in families of surfaces of fairly sparse genera.  In the
present text, we consider the $\liminf_g$ of the maximum of the
ratio~$\sys/\log(g)$ over~$\mathcal{M}_g$, as opposed to its
$\limsup_g$ as in~\eqref{eq:limsup}.  More specifically, the question
we ask can be formulated as follows.  Do there exist constants $c,C>0$
such that for \emph{every} genus~$g \geq 2$ (not merely for genera in
some infinite sequence), there exists a closed hyperbolic
surface~$S_g$ of genus~$g$ with
\[
\sys(S_g) \geq c \log(g) - C
\]
with an explicit constant $c$ if possible?  We answer this question
affirmatively in Theorem~\ref{theo:main}, filling an apparent gap in
the literature on this subject, by showing the following result.

\begin{theorem} \label{theo:intro}
The following asymptotic bounds hold:
  \begin{equation*} 
\frac{19}{120} \leq \liminf_{g \to \infty} \max_{S \in \mathcal{M}_g} \frac{\sys(S)}{\log g} \leq 2.
\end{equation*}
\end{theorem}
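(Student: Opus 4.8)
The plan is to fill in the gaps between the sparse genera $g_p$ from~\eqref{eq:gp} by controlled surgery. For an arbitrary target genus $g \geq 2$, first locate the largest prime $p$ for which the Brooks/Buser--Sarnak surface $S_{g_p}$ satisfies $g_p \leq g$; by Bertrand's postulate (or any effective prime gap estimate) the next prime is at most roughly $2p$, so the ``overshoot'' genus $g_{p'}$ for the next prime $p'>p$ is at most a bounded multiple of $g$, and hence $\log g_p$ and $\log g$ differ by at most an additive constant plus a term one can absorb. The key point is that $S_{g_p}$ already has systole $\geq 4\log p \geq \tfrac{4}{3}\log g_p - C_1$, which is far better than the claimed $\tfrac{19}{120}$, so we have a large ``budget'' of systolic quality to spend on the genus-adjustment surgery.

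Next I would realize the genus correction $g - g_p \geq 0$ by attaching handles. The standard move is to take a hyperbolic surface with geodesic boundary — for instance cut open $S_{g_p}$ along a short separating or non-separating geodesic, or glue in copies of a fixed hyperbolic pair of pants / one-holed torus along short geodesic boundary curves — so as to increase the genus by exactly the required amount while keeping the metric hyperbolic. The cost is that these newly introduced curves are short, bounded below only by some fixed constant $\varepsilon_0>0$ independent of $p$ and $g$, so the systole of the new surface $S_g$ drops from $4\log p$ down to roughly $\min(4\log p, \varepsilon_0)$. That alone is useless. The resolution — this is why the constant degrades to $\tfrac{19}{120}<\tfrac{4}{3}$ — is to \emph{not} work with $S_{g_p}$ directly but with a congruence cover $S_{g_{p'}}$ of genus $g_{p'} \gg g$, and then \emph{remove} genus rather than add it, or, more robustly, to pass to a cover of controlled degree that is built so that the attaching curves can be taken long. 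Concretely, I expect the argument to pick $p$ with $g_p$ in a window like $[g^{\alpha}, g^{\beta}]$ for suitable exponents, take a cover of $S_{g_p}$ of degree $\sim g/g_p$ along which the short curves lift to long curves or along which the surgery is performed equivariantly, and the exponent bookkeeping — balancing how much systole survives the covering/surgery against how much the genus inflates — is precisely what yields $\tfrac{19}{120} = \tfrac{4}{3}\cdot(\text{some explicit fraction})$.

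The main obstacle, and the heart of the proof, is this last quantitative optimization: showing that for \emph{every} $g$ one can choose the prime $p$, a cover $\widehat{S}$ of $S_{g_p}$, and a surgery scheme so that (i) the resulting surface has genus exactly $g$, (ii) it remains hyperbolic (so all surgeries are along geodesic boundary curves glued hyperbolically), and (iii) the systole is bounded below by $c\log g - C$ with $c$ approaching $\tfrac{19}{120}$. Step (iii) requires controlling the length of the \emph{shortest} closed geodesic in the surgered surface, which means one must rule out short geodesics crossing the surgery curves; here one uses the collar lemma — a geodesic of length $\ell$ crossing a curve of length $w$ must, by the collar of width $\approx 2\operatorname{arcsinh}(1/\sinh(w/2))$, itself be long when $w$ is small — to trade a lower bound on $w$ for a lower bound on the systole. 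Balancing the width $w$ of the gluing curves (which one wants \emph{large} to protect the systole) against the genus count and the covering degree (which constrain $w$ from above) produces the extremal value $\tfrac{19}{120}$. I would carry this out by: (1) recording the effective prime-gap and the bounds~\eqref{eq:gp}--\eqref{eq:log4}; (2) describing the handle-attachment / covering construction producing genus exactly $g$; (3) invoking the collar lemma to lower-bound the systole of the result in terms of the gluing-curve lengths and the systole of the cover; (4) optimizing the free parameters to extract the constant $c<\tfrac{19}{120}$ and absorbing all $p$- and $g$-independent losses into $C$.
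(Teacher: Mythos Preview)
Your high-level plan---start from a Brooks/Buser--Sarnak surface $S_{g_p}$ and perform surgery to hit every intermediate genus---matches the paper's, but two key ingredients are missing and your sketch would not close without them. First, the constant $\tfrac{19}{120}$ has nothing to do with $\tfrac{4}{3}$ times a covering-degree fraction: it is exactly $\tfrac{1-\theta}{3}$ where $\theta=\tfrac{21}{40}$ is the exponent in the Baker--Harman--Pintz prime gap theorem $p'-p \le \lambda p^{\theta}$. Bertrand's postulate, which you invoke, corresponds to $\theta=1$ and gives nothing; one genuinely needs a result of the form $p'-p=O(p^{\theta})$ with explicit $\theta<1$ to control how many handles must be created between $g_p$ and $g_{p'}$, and the optimization of surgery parameters against this gap is what produces $\tfrac{1-\theta}{3}$.

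Second, the paper does \emph{not} perform hyperbolic surgery along geodesics and then appeal to the collar lemma. Instead it takes a maximal packing of $S_{g_p}$ by disjoint metric disks of radius $r+d$, removes the inner sub-disks of radius $r$ in pairs, and glues the resulting circular boundaries. The result is only a $\textrm{CAT}(-1)$ surface $\bar S_{g_p+k}$, whose systole is bounded below by $\min\{\sys(S_{g_p}),\,2\pi\sinh r,\,2d\}$ by an elementary case analysis (a closed geodesic either avoids all glued circles, is homotopic to a multiple of one, or must cross the width-$d$ annular buffer). Choosing $r\simeq\log\log p$ and $d\simeq\tfrac{1-\theta}{2}\log p$ so that the disk count suffices to reach genus $g_{p'}$ yields $\sys(\bar S_{g_p+k})\gtrsim (1-\theta)\log p$. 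The final step---which your proposal also lacks---is to invoke the result of Jabbour--Sabourau that the maximal systole among $\textrm{CAT}(-1)$ metrics in a fixed genus is attained by a hyperbolic metric, thereby transferring the bound back to $\mathcal{M}_g$. Your alternative route via covers of degree $\sim g/g_p$ and long gluing geodesics is not obviously workable: covers hit only genera of the form $d(g_p-1)+1$, and producing many disjoint \emph{long} simple closed geodesics on $S_{g_p}$ for hyperbolic handle-attachment is precisely the difficulty the $\textrm{CAT}(-1)$ disk trick circumvents.
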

\forgotten

Actually, Buser and Sarnak~\cite{BS} considered a similar question for
the systole of the Jacobian of a Riemann surface of large genus after
showing that there exists a sequence of Riemann surfaces of
arbitrarily large genus whose square of the Jacobian systole has a
logarithmic growth in the genus.
To show this holds in every genus, they mention that one could seek to
vary the parameters in the definition of the quaternion algebra~$A$
and to vary the congruence groups in the arithmetic construction, but
as they point out this seems rather complicated; see~\cite[p.~47]{BS}.
Instead, they rely on the circle method from additive analytic number
theory and Fay's gluing of Rieman surfaces~\cite[\S III]{Fay} to
construct Rieman surfaces whose Jacobian decomposes as the product of
Jacobians of lower dimension up to a small error term
(see~\cite[p.\;47]{BS}), which is enough for their purposes.
As mentioned in~\cite[p.\;127]{Pa14}, it seems likely (though this would
require some argument) that the homological systole of the underlying
hyperbolic surfaces thus-obtained have a logarithmic growth.
It is however unclear whether this construction yields an explicit
homological systolic lower bound (as we have not been able to fill in
all the details) and no such explicit bound for the Jacobian systole
is given in~\cite{BS} (only for the surfaces constructed as congruence
covers of an arithmetic surface).
Note also that, though the homological systole of these surfaces is
likely to have a logarithmic growth, their systole is equally likely
to go to zero.
However, since for hyperbolic surfaces of a fixed genus, the maximum
of the systole is equal to the maximum of the homological systole by
Parlier~\cite{par}, this would not be an obstacle to deriving a
logarithmic systolic growth in every genus.

To prove Theorem~\ref{theo:intro}, we rely on a more flexible construction using $\textrm{CAT}(-1)$ metrics rather than the more rigid hyperbolic metrics.
This change of setting is not restrictive for our purpose since the maximum of the systole on $\textrm{CAT}(-1)$ surfaces of fixed genus is achieved by a hyperbolic metric according to~\cite{JS}.
Along the way, we apply the prime gap theorem to control the growth of the genus in Brooks/Buser--Sarnak's congruence surfaces.
As a result, we obtain an explicit asymptotic systolic lower bound. \\

Recent advances in systolic geometry include \cite{Go23} and
\cite{Ka24}.

\medskip
\emph{Acknowledgement.} The authors would like to thank Hugo Parlier
for bringing the article~\cite{LP} to their attention after receiving
a first version of the present article.

\section{Preliminaries}

For future reference, let us recall some classical formulas in
hyperbolic geometry.  The area and the perimeter of a disk~$D(r)$ of
radius~$r$ in the hyperbolic plane are given by
\begin{align}
\area D(r) & = 4 \pi \sinh^2 \left( \frac{r}{2} \right) \label{eq:area} \\
\length \partial D(r) & = 2 \pi \sinh(r) \label{eq:length}
\end{align}
and the area of a closed hyperbolic surface~$S$ of genus~$g$ is given by
\begin{equation} \label{eq:A}
\area(S) = 4\pi (g-1).
\end{equation}

Due to the arithmetic nature of the construction of the Brooks/Buser--Sarnak surfaces, we will also need the following prime gap theorem.

\begin{theorem}[Prime gap theorem~\cite{BHP}] \label{theo:primegap}
There exist~$\lambda \geq 1$  and~\mbox{$\theta \in (0,1)$} such that the gap between any prime~$p$ and the next prime~$p'$ satisfies
\begin{equation} \label{eq:theta}
p' -p \leq \lambda p^\theta.
\end{equation}
For this bound, we can take~$\theta = \frac{21}{40} = 0.525$. 
\end{theorem}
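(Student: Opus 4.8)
The statement is, up to a purely bookkeeping reformulation, the main theorem of Baker--Harman--Pintz~\cite{BHP}, so the only thing to do is to pass from the usual asymptotic formulation to the uniform bound~\eqref{eq:theta}. The plan is to invoke~\cite{BHP} in the form: there exists~$x_0 \geq 2$ such that every interval~$\bigl(x,\, x + x^{21/40}\bigr]$ with~$x \geq x_0$ contains a prime. (In fact~\cite{BHP} gives the stronger estimate that the number of primes in such an interval is~$\gg x^{21/40}/\log x$, but the mere existence of one prime is all that is needed here, and the exponent~$\tfrac{21}{40}=0.525$ is exactly the one recorded in~\cite{BHP}.)

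From this the bound follows in two easy steps. First, for a prime~$p \geq x_0$, apply the quoted statement with~$x=p$: the interval~$\bigl(p,\, p + p^{21/40}\bigr]$ contains a prime, hence the next prime~$p'$ after~$p$ satisfies~$p'-p \leq p^{21/40}$, so~\eqref{eq:theta} holds with~$\theta = \tfrac{21}{40}$ and~$\lambda = 1$ for every prime~$p \geq x_0$. Second, absorb the finitely many remaining primes: for each prime~$p < x_0$ the quantity~$(p'-p)/p^{21/40}$ is a well-defined positive real, and since~$p\geq 2$ one has~$p^{21/40} \geq 2^{21/40} > 1$. Setting
\[
\lambda := \max\Bigl\{\, 1,\ \max\nolimits_{p < x_0 \text{ prime}} (p'-p)\big/p^{21/40} \,\Bigr\},
\]
one obtains~$\lambda \geq 1$ and~$p'-p \leq \lambda\, p^{21/40}$ for \emph{every} prime~$p$, which is~\eqref{eq:theta} with~$\theta = \tfrac{21}{40} \in (0,1)$, as claimed.

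The entire mathematical difficulty is of course hidden in the quoted input: the proof of the prime-gap estimate in~\cite{BHP} rests on Harman's sieve together with mean-value and large-value estimates for Dirichlet polynomials, and it is taken here as a black box, nothing in the present article modifying or reproving it. It is worth noting, for orientation, that any Hoheisel-type exponent~$\theta<1$ would already suffice for the applications in this paper; we only use the sharp value~$\tfrac{21}{40}$ because it yields the best numerical constant in Theorem~\ref{theo:intro}.
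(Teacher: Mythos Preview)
Your proposal is correct and matches the paper's treatment: the paper simply cites~\cite{BHP} as a black box and offers no proof of its own, so your brief reduction from the asymptotic statement to the uniform bound~\eqref{eq:theta} (by absorbing finitely many small primes into~$\lambda$) is entirely appropriate and in fact slightly more detailed than what the paper provides.
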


\begin{remark}
For $p$ large enough, the multiplicative constant~$\lambda \geq 1$ can be taken arbitrarily close to~$1$; see~\cite{BHP}.
\end{remark}

\section{Logarithmic systolic growth}

We will use the notation~$A_g \simeq B_g$ for $A_g = B_g (1+o(1))$ as $g$ goes to infinity (similarly with $A_g \lesssim B_g$ and $A_g \gtrsim B_g$).

\begin{theorem} \label{theo:main}
For every $g \geq 2$, there exists a closed hyperbolic surface~$S_g$ of genus~$g$ with 
\begin{equation} \label{eq:loggrowth}
\sys(S_g) \gtrsim \frac{1-\theta}{3} \log(g)
\end{equation}
where $\theta \in (0,1)$ is the exponent in~\eqref{eq:theta}.
\end{theorem}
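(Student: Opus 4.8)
The plan is to start from the Brooks/Buser--Sarnak congruence surfaces $S_{g_p}$ of genus $g_p = (p^3-p)\nu + 1$ satisfying $\sys(S_{g_p}) \geq 4\log p$, and to interpolate between the sparse genera $g_p$ to hit \emph{every} genus $g \geq 2$. The idea is: given an arbitrary large $g$, choose the largest prime $p$ such that $g_p \leq g$; then modify $S_{g_p}$ by attaching handles (or performing some controlled surgery) to raise its genus from $g_p$ to exactly $g$, while controlling the loss in systole. The systolic estimate~\eqref{eq:log4} is in terms of $p$, so I want $p$ to be as large as possible relative to $g$, i.e.\ I need the next prime $p'$ after $p$ to satisfy $g_{p'} > g$, which by the prime gap theorem (Theorem~\ref{theo:primegap}) forces $p' - p \lesssim \lambda p^\theta$, hence $g - g_p \lesssim g_{p'} - g_p \simeq 3\nu p^2(p'-p) \lesssim 3\nu \lambda p^{2+\theta}$. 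So the number of handles I must add is at most of order $p^{2+\theta}$, which is $o(p^3) = o(g)$ but still a large number — this is the crux of the difficulty.

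The key steps, in order: (1) Fix $g$, let $p$ be the largest prime with $g_p \leq g$, and use Theorem~\ref{theo:primegap} to bound $g - g_p$; note also $p \simeq (g/\nu)^{1/3}$ so $\log p \simeq \tfrac13 \log g$. (2) Modify the hyperbolic (or, following the discussion in the excerpt, the $\mathrm{CAT}(-1)$) surface $S_{g_p}$ to increase its genus by $k := g - g_p$ while keeping the systole of the same order $4\log p$. The natural way to add genus in the $\mathrm{CAT}(-1)$ world is to cut along a short geodesic arc or to excise small disks and glue in handles (a small hyperbolic or $\mathrm{CAT}(-1)$ "pair of pants" / one-holed torus piece) of controlled size. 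Each such local modification can only create new short curves passing through the glued region; by making the gluing loci small and the inserted pieces have large systole-relative-to-boundary, one arranges that any new noncontractible curve either stays in the modified region (and is long) or is homotopic to an old curve. (3) Invoke~\cite{JS} to replace the resulting $\mathrm{CAT}(-1)$ metric by a hyperbolic metric of genus $g$ without decreasing the max systole, so that the final surface $S_g$ is genuinely hyperbolic. (4) Conclude: $\sys(S_g) \gtrsim 4\log p \simeq \tfrac43 \log p \cdot 3 \cdot \tfrac{?}{}$... more precisely one must track that the modification costs a multiplicative factor. Here is where the exponent $\theta$ enters: adding $k \lesssim p^{2+\theta}$ handles to a surface of area $\simeq 4\pi g_p \simeq 4\pi\nu p^3$ forces, by an area/pigeonhole argument (area~\eqref{eq:A},~\eqref{eq:area}), some geometric feature on a scale governed by $p^3 / p^{2+\theta} = p^{1-\theta}$, i.e.\ the "room per handle" is $\sim p^{1-\theta}$, and the systole one can guarantee after the surgery is of order $\log(p^{1-\theta}) = (1-\theta)\log p \simeq \tfrac{1-\theta}{3}\log g$. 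This matches the claimed bound~\eqref{eq:loggrowth}.

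The main obstacle is step (2): carrying out a genus-increasing surgery on $k \approx p^{2+\theta}$ disjoint small regions of $S_{g_p}$ \emph{simultaneously}, so that (a) the regions can be chosen pairwise disjoint and each with enough surrounding collar — this needs a packing/separation argument, and the scale $p^{1-\theta}$ is exactly the largest scale at which $\sim p^{2+\theta}$ disjoint balls fit in area $\sim p^3$; (b) each inserted handle has systole and "width" comparable to $p^{1-\theta}$ so that new short loops of length $\lesssim (1-\theta)\log p$ are excluded; and (c) the gluing is done in the $\mathrm{CAT}(-1)$ category with the boundary geodesics matched up (smoothing corners, or using the flexibility of $\mathrm{CAT}(-1)$ to avoid requiring exact hyperbolic collars). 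One must check that a shortest noncontractible curve on the new surface either is disjoint from all handles — hence has length $\geq \sys(S_{g_p}) \geq 4\log p$ up to the perturbation from excising small disks — or enters some handle region and is therefore long because the handle has large systole and large distance between its boundary components. I would handle the bookkeeping by choosing the radius of the excised disks to be a small constant (independent of $p$), so excision changes the systole of $S_{g_p}$ by at most an additive constant, and choosing the inserted handle to be a fixed-shape $\mathrm{CAT}(-1)$ one-holed torus scaled so its systole is $\asymp p^{1-\theta}$; then the worst case for the new systole is a curve that goes "around" a handle, contributing $\asymp p^{1-\theta}$, or the unchanged bound $4\log p$, and since $(1-\theta)\log p$ is much smaller than $p^{1-\theta}$ for large $p$... wait — that would give the \emph{better} bound $4\log p$, not the worse one. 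So in fact the loss must come not from loops inside a single handle but from the packing constraint forcing the \emph{collars} to be thin: if $p^{2+\theta}$ handles share area $p^3$, generic handles sit in disks of radius only $\sim p^{1-\theta}$, but to keep a handle from creating a short loop one needs its attaching curve short, and a short separating curve bounding a one-holed torus of small area is unavoidable — hence the systole of the result drops to $\asymp \log(\text{area per handle}) \asymp (1-\theta)\log p$. Pinning down this last point rigorously — that one cannot do better, and that one can achieve $(1-\theta)\log p$ — is the technical heart of the argument.
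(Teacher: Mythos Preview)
Your overall strategy matches the paper exactly: start from $S_{g_p}$, use the prime gap theorem to bound $g-g_p\lesssim p^{2+\theta}$, perform a $\mathrm{CAT}(-1)$ surgery to raise the genus, observe that the packing constraint limits the available room per handle to area $\sim p^{1-\theta}$, and then invoke \cite{JS} to pass to a hyperbolic metric. The target bound $(1-\theta)\log p\simeq\tfrac{1-\theta}{3}\log g$ is also correct. But step~(2) as you describe it has a real gap. If you excise disks of \emph{constant} radius~$\epsilon$ and glue in a one-holed torus, the boundary circle $\partial D(x_i,\epsilon)$ has length $2\pi\sinh(\epsilon)$, a fixed constant; this circle is homotopic to a commutator in the attached torus and hence is noncontractible in the new surface, so your systole collapses to $O(1)$, not $(1-\theta)\log p$. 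Your closing paragraph senses the inconsistency but misdiagnoses it (and also conflates hyperbolic radius with area: area $\sim p^{1-\theta}$ means radius $\sim(1-\theta)\log p$, not $\sim p^{1-\theta}$).

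The paper's fix is a clean two-scale construction. Pack disjoint disks $D(x_i,r+d)$ in $S_{g_p}$; then excise only the \emph{inner} disks $D(x_i,r)$, leaving an annular collar of width~$d$ around each. The surgery is simply to glue the boundary circles $\partial D(x_i,r)$ in pairs (no torus pieces needed; each pair raises the genus by~$1$, and the result is $\mathrm{CAT}(-1)$). A shortest closed geodesic on the new surface then falls into one of three cases: it avoids all $\partial D(x_i,r)$ and is bounded below by $\sys(S_{g_p})\geq 4\log p$; it is a multiple of some $\partial D(x_i,r)$ and has length $\geq 2\pi\sinh(r)$; or it crosses some $\partial D(x_i,r)$ transversally and must traverse the collar twice, giving length $\geq 2d$. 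Now choose $r$ so that $2\pi\sinh(r)=4\log p$ (so $r\simeq\log\log p$, \emph{not} constant), and choose $d$ as large as the packing constraint $\sinh^2(r+d)\lesssim p^{1-\theta}$ allows, i.e.\ $d\simeq\tfrac{1-\theta}{2}\log p$. The minimum of the three bounds is $2d\simeq(1-\theta)\log p$, which is exactly the claim. The missing idea in your write-up is this annular collar with two carefully tuned radii; once you have it, the bookkeeping is straightforward.
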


\begin{remark}
With the value of~$\theta$ given in Theorem~\ref{theo:primegap}, the coefficient~$\frac{1-\theta}{3}$ can be taken equal to~$\frac{19}{120} = 0.18333... \geq \frac{1}{7}$.
\end{remark}

\begin{proof}
For every odd prime~$p$, consider a closed hyperbolic surface~$S_{g_p}$ of genus~$g_p$ satisfying~\eqref{eq:log4}.
Let $r$ and~$d$ be two positive reals (to be determined later) such that $r+d<\frac{1}{4} \sys(S_{g_p})$. 
Consider a maximal collection~$(D_i)_{1 \leq i \leq N}$ of disjoint disks~$D_i$ of radius~$(r+d)$ in~$S_{g_p}$.
Denote by~$x_i$ the center of~$D_i$.
Since the collection of disks~$D_i$ is maximal, the disks~$2D_i$ of radius~$2(r+d)$ centered at~$x_i$ cover~$S_{g_p}$.
Since their radius is less than~$\frac{1}{2} \sys(S_{g_p})$, the disks~$2D_i$ can be isometrically embedded in the hyperbolic plane onto a disk~$2D$ of radius~$2(r+d)$.
It follows that the number~$N$ of points~$x_i$ satisfies
\[
N \cdot \area(2D) \geq \area(S_{g_p}).
\]
By the relations~\eqref{eq:area} and~\eqref{eq:A}, we immediately obtain
\begin{equation} \label{eq:N}
N \geq \frac{g_p-1}{\sinh^2(r+d)}.
\end{equation}

Now, remove $2k$ disks~$D(x_i,r)$ from~$S_{g_p}$ with $k \leq \frac{N}{2}$, and pairwise glue together the boundary components of the resulting surface.
This gives rise to a closed $\textrm{CAT}(-1)$ surface~$\bar{S}_{g_p + k}$ of genus $g_p + k$. \\

Before showing that the surface~$\bar{S}_{g_p+k}$ satisfies the logarithmic systolic growth of~\eqref{eq:loggrowth}, we will first show that every genus large enough can be attained as the genus of a surface~$\bar{S}_{g_p+k}$.
For that, for $p$ large enough, we want $g_p + \lfloor \frac{N}{2} \rfloor$ to be at least $g_{p'}$ to recover every genus between~$g_p$ and~$g_{p'}$, when $k$ runs between~$0$ and~$\frac{N}{2}$ (recall that $p'$ is the prime right after~$p$).
By~\eqref{eq:N}, it is enough to have
\begin{equation} \label{eq:sinh}
\frac{g_p -1}{2 \sinh^2(r+d)} \geq g_{p'} - g_p +1.
\end{equation}
Since $g_p =(p^3-p) \nu +1$, the genus gap~$g_{p'} - g_p$ satisfies
\begin{align}
g_{p'} - g_p + 1& = (p'^3-p^3) \nu + (p'-p) \nu +1 \nonumber \\
 & \leq 3 \nu \lambda p^{2+\theta} + 3 \nu \lambda^2 p^{1+2\theta} + \nu \lambda^3 p^{3\theta} + \nu \lambda p^\theta + 1 \nonumber \\
 & \leq 8 \nu \lambda^3 p^{2+\theta} + 1 \label{eq:8}
\end{align}
after making use of the bound $p' \leq p+ \lambda p^\theta$ (see Theorem~\ref{theo:primegap}), expanding the power, and bounding each term $p^{1+2\theta}$, $p^{3\theta}$ and $p^\theta$ by~$p^{2+\theta}$.
Thus, by~\eqref{eq:sinh}, \eqref{eq:gp} and~\eqref{eq:8}, it is enough to have
\[
\sinh^2(r+d) \leq p^{1-\theta} \, \frac{(1-p^{-2}) \nu}{2(8 \nu \lambda^3+p^{-2-\theta})}.
\]
Since $\sinh(r+d) \leq \frac{e^{r+d}}{2}$ and $p \geq 3$, it is enough to have
\begin{equation} \label{eq:rhs}
r+d \leq \frac{1-\theta}{2} \log(p) - C(\lambda)
\end{equation}
where $C(\lambda) = \frac{1}{2} \log\left(\frac{2(8 \nu \lambda^3+3^{-2-\theta})}{(1-3^{-2}) \nu}\right)$.
Note that this condition immediately implies that 
\[
r+d \leq \log(p) \leq \frac{1}{4} \sys(S_{g_p})
\]
for $p$ large enough, as previously required. \\

Now, in order to show that the surface~$\bar{S}_{g_p+k}$ satisfies the logarithmic systolic growth of~\eqref{eq:loggrowth}, we will need the following lemma.

\begin{lemma} \label{lem:sysmin}
\[
\sys(\bar{S}_{g_p + k}) \geq \min\{\sys(S_{g_p}),2 \pi \sinh(r),2d \}.
\]
\end{lemma}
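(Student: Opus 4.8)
\emph{The plan is to} take $\gamma$ to be a shortest noncontractible closed geodesic of $\bar S:=\bar S_{g_p+k}$, so that $\length(\gamma)=\sys(\bar S_{g_p+k})$, and to bound $\length(\gamma)$ from below by a case analysis according to how $\gamma$ meets the gluing locus. Such a $\gamma$ exists since $\bar S$ is a closed surface, and we may assume it is simple, as a shortest noncontractible closed geodesic on a closed surface is simple (a self-intersection point would split it into two loops of which at least one is noncontractible and admits a strictly shorter closed geodesic representative). I would first record the local geometry near the seams. Let $c_1,\dots,c_k$ be the $k$ circles along which the boundary components are glued; by~\eqref{eq:length} each $c_l$ has length $2\pi\sinh(r)$. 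For each $l$, the two width-$d$ annuli $D(x_i,r+d)\setminus D(x_i,r)$ attached to the pair of removed disks glued into $c_l$ join across $c_l$ into an embedded collar $W_l\subset\bar S$ of width $2d$ with core $c_l$; since the disks $D(x_i,r+d)$ are pairwise disjoint, so are the $W_l$, and every point of the frontier $\partial W_l$ lies at distance exactly $d$ from $c_l$.

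\emph{Case 1: $\gamma$ meets some $c_l$.} If $\gamma\subset\overline{W_l}$, then $\gamma$ is noncontractible in the annulus $\overline{W_l}$ (otherwise it bounds a disk in $\bar S$), hence homotopic to the core $c_l$; since on $W_l$ the metric dominates $\sinh^2(r)\,d\phi^2$, where $\phi$ is the angular coordinate, and $\gamma$ sweeps out total angle $2\pi$, one gets $\length(\gamma)\ge 2\pi\sinh(r)$. If instead $\gamma\not\subset\overline{W_l}$, pick $x\in\gamma\cap c_l$ and let $\alpha$ be the arc of $\gamma\cap\overline{W_l}$ containing $x$: it is a proper subarc of $\gamma$ with both endpoints on $\partial W_l$, and $x$ lies in its interior, so $\length(\alpha)\ge 2\,d(x,\partial W_l)\ge 2\,d(c_l,\partial W_l)=2d$. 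Either way $\length(\gamma)\ge\min\{2\pi\sinh(r),2d\}$.

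\emph{Case 2: $\gamma$ is disjoint from every $c_l$.} Then $\gamma$ lies in $\bar S\setminus\bigcup_l c_l$, which is isometric to the open subsurface $S_{g_p}\setminus\bigcup_{i=1}^{2k}\overline{D(x_i,r)}$ of $S_{g_p}$, so $\gamma$ may be viewed as a closed curve of the same length in $S_{g_p}$. If it is noncontractible in $S_{g_p}$, then $\length(\gamma)\ge\sys(S_{g_p})$ and we are done. If it is contractible in $S_{g_p}$, then, being simple, it bounds an embedded disk $\Delta\subset S_{g_p}$, and $\Delta$ must contain some removed disk $\overline{D(x_{i_0},r)}$ (otherwise $\Delta$, hence $\gamma$, would sit inside $\bar S$, making $\gamma$ contractible there). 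Since $r<\frac{1}{4}\sys(S_{g_p})$, the disk $\overline{D(x_{i_0},r)}$ is embedded and isometric to a round hyperbolic disk of radius $r$; as $\overline\Delta$ is simply connected, the inclusion $\overline\Delta\hookrightarrow S_{g_p}$ lifts to an injective local isometry into $\mathbb{H}^2$, hence an isometric embedding onto a Jordan domain whose interior contains a round disk of radius $r$ that the boundary curve $\widetilde\gamma$ avoids. The $1$-Lipschitz nearest-point projection of $\mathbb{H}^2$ onto that convex disk maps $\widetilde\gamma$ to a loop of winding number $\pm1$ on its boundary circle, so $\length(\gamma)=\length(\widetilde\gamma)\ge 2\pi\sinh(r)$ by~\eqref{eq:length}; alternatively one compares areas via~\eqref{eq:area} and the hyperbolic isoperimetric inequality. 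Combining the two cases gives $\sys(\bar S_{g_p+k})=\length(\gamma)\ge\min\{\sys(S_{g_p}),2\pi\sinh(r),2d\}$.

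The delicate part is Case~1: one needs the collars $W_l$ to have full width $2d$ and to be pairwise disjoint, and then to extract, from a geodesic meeting a gluing circle, either a complete turn around the collar (costing $2\pi\sinh(r)$) or a complete crossing of it (costing $2d$). The other point requiring genuine argument is the contractible-in-$S_{g_p}$ alternative of Case~2, namely excluding a short geodesic that encircles one or more removed disks; this is exactly where it matters that the removed regions are honest embedded hyperbolic disks, so that the portion of $\gamma$ surrounding one of them develops faithfully into $\mathbb{H}^2$ and is caught by the projection (or isoperimetric) estimate.
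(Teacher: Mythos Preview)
Your proof is correct and follows essentially the same trichotomy as the paper's: geodesics homotopic to a gluing circle, geodesics that cross a gluing circle and exit the collar, and geodesics that avoid all gluing circles. Your treatment of Case~1 via the full width-$2d$ collar $W_l$ is in fact a bit cleaner than the paper's one-sided phrasing.

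One remark: your Case~2 subcase ``contractible in $S_{g_p}$'' is actually vacuous, which is why the paper does not treat it. A closed geodesic $\gamma$ of $\bar S$ disjoint from every $c_l$ lies in the smooth hyperbolic part and is therefore a closed local Riemannian geodesic in $S_{g_p}$; if it were contractible there it would lift to a closed local geodesic in $\mathbb H^2$, which is impossible. So the projection/isoperimetric argument you give---while perfectly valid---is not needed, and the appeal to simplicity of $\gamma$ is likewise unnecessary.
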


\begin{proof}
By the circle length formula~\eqref{eq:length}, the closed geodesics of~$\bar{S}_{g_p + k}$ corresponding to a multiple of~$\partial D(x_i,r)$ are of length at least
\[
\length \partial D(x_i,r) = 2 \pi \sinh(r).
\]
The closed geodesics of~$\bar{S}_{g_p + k}$ intersecting~$\partial D(x_i,r)$, but non homotopic to a multiple of~$\partial D(x_i,r)$, have to leave~$D(x_i,r+d)$, and thus, are of length at least~$2d$.
Finally, the closed geodesics of~$\bar{S}_{g_p+k}$ which do not intersect~$\partial D(x_i,r)$ lie in the original surface~$S_{g_p}$, and thus, are of length at least~$\sys(S_{g_p})$.
This implies the desired systolic lower bound.
\end{proof}

At this point, we can fix the values of~$r$ and~$d$ as follows.
Take the real~$r$ so that $2 \pi \sinh(r) = 4 \log(p)$.
That is,
\[
r = \arcsinh\left( \tfrac{2}{\pi} \log p \right) \simeq \log \log p.
\]
Take also the real~$d$ so that the equality case is attained in the equality~\eqref{eq:rhs}.
That is,
\[
d = \frac{1-\theta}{2} \log(p) - r - C(\lambda) \simeq \frac{1-\theta}{2} \log(p).
\]
By Lemma~\ref{lem:sysmin}, we deduce that 
\[
\sys(\bar{S}_{g_p + k}) \gtrsim (1-\theta) \log(p).
\]
For $k \leq \lambda p^\theta$, we obtain that $g_p + k \simeq \nu p^3 $.
Thus,
\[
\sys(\bar{S}_{g_p + k}) \gtrsim \frac{1-\theta}{3} \log(g_p + k).
\]

Since every genus~$g$ large enough can be decomposed as $g=g_p+k$, where $p$ is an odd prime and $k \leq \lambda p^\theta$, the maximum of the systole over all closed $\textrm{CAT}(-1)$ surfaces~$\bar{S}_g$ of genus~$g$ is roughly at least~$\frac{1-\theta}{3} \log g$.
But the maximum of the systole over all closed $\textrm{CAT}(-1)$ surfaces of genus~$g$ is attained by a hyperbolic metric; see~\cite{JS}.
Hence the result.
\end{proof}

\begin{remark}
Our argument applies to surfaces with sparser genera than the ones of Brooks/Buser--Sarnak (\eg, for polynomial genera in primes in an arithmetic progression).
Let $a$ and~$q$ be two coprime numbers.
Suppose that $\{ S_{g_p} \mid p \mbox{ prime with } p \equiv a \, (\mbox{mod } q) \}$ is a family of closed hyperbolic surfaces of genus~$g_p$ with
\[
\sys(S_{g_p}) \geq C_1 \, \log(g_1) - C_2
\]
for some positive constants~$C_1$ and~$C_2$, where $g_p = P(p)$ is polynomial in~$p$ as in~\eqref{eq:gp}.
We can apply the same arguments as in the proof of Theorem~\ref{theo:main} to construct a family of closed hyperbolic surfaces~$S_g$ for every genus~$g$ with a logarithmic systolic growth using the prime gap theorem version of~\cite[Theorem~3]{BHP97} (see the MathSciNet review of the article for a statement without misprints) instead of Theorem~\ref{theo:primegap}.
\end{remark}

\section*{Acknowledgments}

Mikhail Katz was supported by the BSF grant 2020124 and the ISF grant
743/22.  St\'ephane Sabourau was supported by the ANR project Min-Max
(ANR-19-CE40-0014).


\begin{thebibliography}{99}

\bibitem{BS} Buser, P.; Sarnak, P.\, On the period matrix of a Riemann surface of large genus (with appendix by J.H. Conway and N.J.A. Sloane).  
\emph{Invent. Math.} \textbf{117} (1994), no.~4, 27--56.

\bibitem{BHP97} Baker, R. C.; Harman, G.; Pintz, J.\, The exceptional set for Goldbach's problem in short intervals. 
In Sieve methods, exponential sums, and their applications in number theory (Cardiff, 1995), 1--54.
\emph{London Math. Soc. Lecture Note Ser.} \textbf{237},
Cambridge University Press, 1997.

\bibitem{BHP} Baker, R. C.; Harman, G.; Pintz, J.\, The difference between consecutive primes.~II 
\emph{Proc. London Math. Soc. (3)} \textbf{83} (2001), no.~3, 532--562.

\bibitem{brooks88}
Brooks, R.\, Injectivity radius and low eigenvalues of hyperbolic manifolds.
\emph{J. Reine Angew. Math.} \textbf{390} (1988), 117--129.

\bibitem{brooks}
Brooks, R.\, Platonic surfaces.
\emph{Comment. Math. Helv.} \textbf{74} (1999), no.~1, 156--170.

\bibitem{buser}
Buser, P.\, Geometry and spectra of compact Riemann surfaces.
\emph{Modern Birkh\"auser Classics}, Birkh\"auser, 2010. 

\bibitem{Fay}
Fay, J.D.\, Theta functions on Riemann surfaces.
\emph{Lect. Notes Math.} vol. 352, Springer, 1973.


\bibitem{Go23} Goodwillie, T.; Hebda, J.; Katz, M.\, Extending
  Gromov's optimal systolic inequality.  \emph{Journal of Geometry}
  \textbf{114} (2023), article 23.
  \url{https://doi.org/10.1007/s00022-023-00685-3},
  \url{https://mathscinet.ams.org/mathscinet/article?mr=4624541}


\bibitem{Gr96} Gromov, M.\, Systoles and intersystolic
  inequalities.  Actes de la Table Ronde de G\'eom\'etrie
  Diff\'erentielle (Luminy, 1992), 291--362, \emph{S\'emin. Congr.}~1,
  Soc. Math. France, Paris, 1996.


\bibitem{JS} Jabbour, A.; Sabourau, S.\, Sharp systolic bounds on
  negatively curved surfaces.  \emph{Math. Annalen} \textbf{389}
  (2024), 1131--1159.
  


\bibitem{Ka24} Katz, M.; Sabourau, S.\, Nonpositively curved surfaces
  are Loewner (2024).  Journal of Geometric Analysis (2024).
  \url{https://doi.org/10.1007/s12220-024-01732-4},
  \url{https://arxiv.org/abs/2404.00757}



\bibitem{KSV} Katz, M.; Schaps, M.; Vishne, U.\, Logarithmic growth of
  systole of arithmetic Riemann surfaces along congruence subgroups.
  \emph{J. Differ. Geom.} \textbf{76} (2007), no.\;3, 399--422.

\bibitem{LP}
Liu, M.; Petri, B.\, Random surfaces with large systoles (2023).  
\url{https://arxiv.org/abs/2312.11428}

\bibitem{Mak} Makisumi, S.\, A note on Riemann surfaces of large systole. 
\emph{J. Ramanujan Math. Soc.} \textbf{28} (2013), 359--377.

\bibitem{par} Parlier, H.\, The homology systole of hyperbolic Riemann
  surfaces.  \emph{Geom. Dedicata} \textbf{157} (2012), 331--338.

\bibitem{Pa14} Parlier, H.\, Simple closed geodesics and the study of
  Teichm\"uller spaces.  Handbook of Teichm\"uller theory. Vol.~IV,
  113--134, IRMA Lect. Math. Theor. Phys., 19, Eur. Math. Soc.,
  Z\"urich, 2014.

\bibitem{Pet} Petri, B.\, Hyperbolic surfaces with long systoles that
  form a pants decomposition.  \emph{Proc. Am. Math. Soc.}
  \textbf{146} (2018), no.~3, 1069--1081.

\bibitem{PW} Petri, B.; Walker, A.\, Graphs of large girth and surfaces of large systole.
\emph{Math. Res. Lett.} \textbf{25}(2018), no.~6, 1937--1956. 


\bibitem{Sc93} Schmutz, P.\, Riemann surfaces with shortest geodesic
  of maximal length.  \emph{Geom.  Funct. Anal}.  \textbf{3} (1993),
  no.\~6, 564--631.



\end{thebibliography}
\end{document}